\pgfplotsset{
	table/search path={plot_figures},
}
\newtheorem{theorem}{Theorem}
\def\cu#1{{\color{black}#1}}  
\def\pd#1{{\color{black}#1}} 
\def\dd#1{{\color{black}#1}} 
\DeclareMathOperator*{\argmin}{arg\,min}
\DeclareMathOperator{\spn}{span}
\title{\LARGE \bf Distributed Computation of Wasserstein Barycenters over Networks}
\author{C\'{e}sar A.\ Uribe, Darina Dvinskikh, Pavel Dvurechensky, Alexander Gasnikov and Angelia Nedi\'{c}
	\thanks{C.A. Uribe (\textit{cauribe@mit.edu}) is with the Laboratory for Information and Decision Systems (LIDS), Massachusetts Institute of Technology. \dd{D. Dvinskikh
			(\textit{darina.dvinskikh@wias-berlin.de}) is with the Weierstrass Institute for Applied Analysis and Stochastics, and Institute for Information
			Transmission Problems RAS.}
		\pd{P. Dvurechensky (\textit{pavel.dvurechensky@wias-berlin.de}) is with Weierstrass Institute for Applied Analysis and Stochastics, and Institute for Information
			Transmission Problems RAS.}
		A. Gasnikov (\textit{gasnikov@yandex.ru}) is with the Moscow Institute of Physics and Technology, and the    Institute for Information Transmission Problems RAS. 
		A. Nedi\'{c} (\textit{angelia.nedich@asu.edu}) is with the ECEE Department, Arizona State University, and Moscow Institute of Physics and Technology. 
		The work of A. Gasnikov in Section III-C was supported by the grant of the president of Russian Federation no. MD-1320.2018.1. 
		The work of A. Nedi\'{c} and C.A.\ Uribe is supported by the National Science Foundation under grant no.\ CPS~15-44953.
		\pd{The work of P. Dvurechensky in Section III-C was supported by the grant of the president of Russian Federation no. MK-1806.2017.9.
			The work of A. Gasnikov and P. Dvurechensky in Sections III-A and III-B was conducted in IITP RAS and supported by the Russian Science Foundation
			grant (project 14-50-00150).
		}
	}  
}
\date{}
\begin{document}
	
	\newcommand{\cfbox}[2]{%
		\colorlet{currentcolor}{.}%
		{\color{#1}%
			\fbox{\color{currentcolor}#2}}%
	}

	\maketitle
	\begin{abstract}
		We propose a new \cu{class-optimal} algorithm for the distributed computation of Wasserstein Barycenters over networks. Assuming that each node in a graph has a probability distribution, we prove that every node can reach the barycenter of all distributions held in the network by using local interactions compliant with the topology of the graph. We provide an estimate for the minimum number of communication rounds required for the proposed method to achieve arbitrary relative precision both in the optimality of the solution and the consensus among all agents for undirected fixed networks. 
	\end{abstract}
	
	\section{Introduction}
	
	Optimal Transport (OT) distances (also known as \textit{earth mover's distances} or \textit{Wasserstein distances}) design an optimal plan to move ``mass" from one probability distribution to another. This problem can be traced back to the early work of Monge~\cite{Monge1781} and Kantorovich~\cite{Kantorovich1942} and has been of constant interest for allowing natural formulations to the problems of comparing, interpolating, and measuring distances of functions~\cite{Levy2017}. On the other hand, computational OT has gained popularity for its applications in learning theory~\cite{Frogner2015}, computer vision~\cite{Rabin2011}, computer graphics~\cite{Solomon2015}, statistical inference~\cite{Srivastava2015a}, information fusion~\cite{Bishop2014a}; and its relative complexity advantages with respect to classical methods~\cite{Dvurechensky2018}. \cu{Particularly, \textit{large-scale} OT has been of recent interest for applications where large quantities of data are available and efficient algorithms are required~\cite{Blondel2017,Seguy2017,Aude2016}}. Comprehensive accounts of the OT problem and its computational aspects can be found in~\cite{Villani2008,Solomon2017,Peyre2017,Levy2017}. 
	
	One of the common uses of the Wasserstein distance is the aggregation of distributions by considering their barycenter~\cite{Agueh2011}, which itself is another distribution~\cite{Cuturi2014}. Wasserstein Barycenters (WB) have been shown superior to traditional Euclidean means in a range of application such as image processing~\cite{Agueh2011}, economics and finance~\cite{Beiglbock2013}, and condensed matter physics~\cite{Buttazzo2012}. Figure~\ref{fig:sevens} shows a sample of $30$ images of the digit $7$ from the MNIST dataset~\cite{LeCun1998}, and their respective Euclidean mean and Wasserstein mean. The WB better captures the structural features of the input images.
	
	\begin{figure}[tb!]
		\centering
			\includegraphics[trim={0 0 0 6.2cm},clip,width=0.40\textwidth]{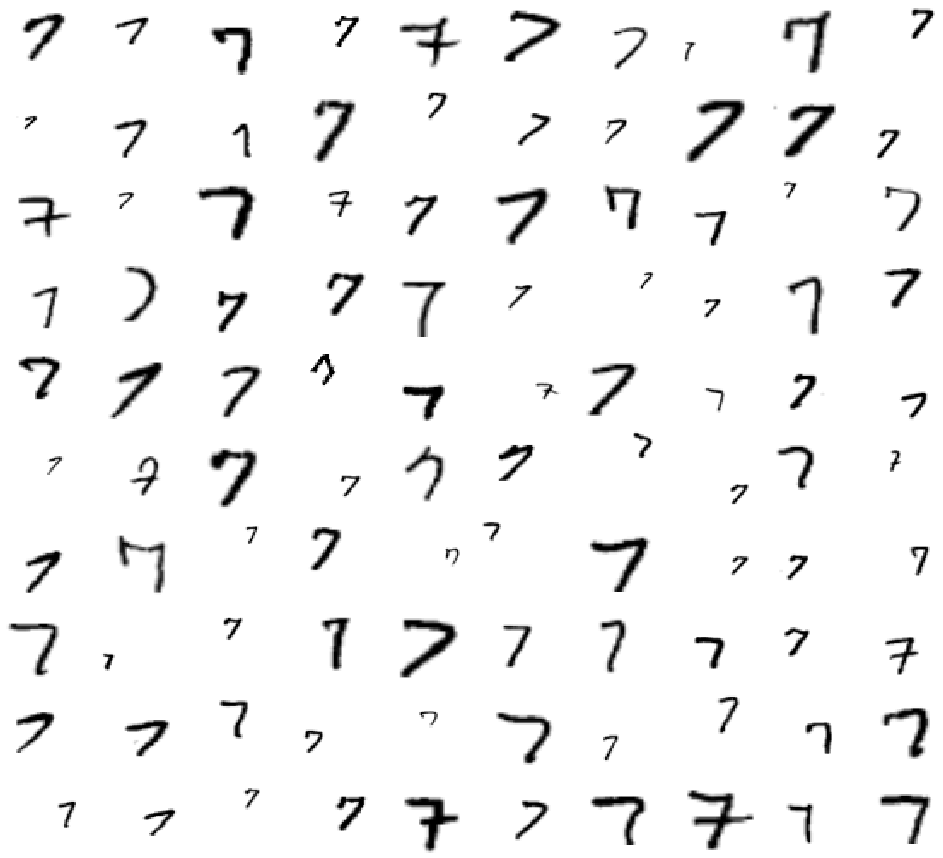}
			\\
			\fbox{\includegraphics[trim={0.3cm 0.3cm 0.3cm 0.3cm},clip,width=0.06\textwidth]{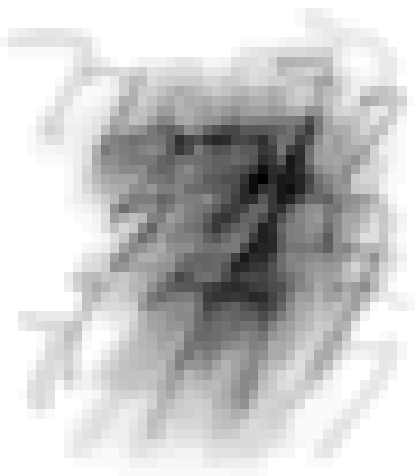}}
			\fbox{\includegraphics[trim={0.8cm 0.5cm 0.5cm 0.5cm},clip,width=0.0585\textwidth]{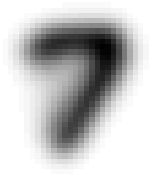}}
			\put(5,20){Wasserstein}
			\put(18,10){Mean}
			\put(-135,20){Euclidean}
			\put(-126,10){Mean}
			\caption{Samples of the digit $7$ from the MNIST dataset and comparison of their Euclidean and Wasserstein Barycenters.}
		\label{fig:sevens}
        \vspace{-0.6cm}
	\end{figure}
	
	For discrete and finite distributions, the WB can be efficiently computed by solving a large linear program~\cite{Anderes2016} or using regularization to approximate a solution efficiently and exploit its convenient algebraic properties~\cite{Agueh2011,Cuturi2014,Cuturi2016}. Here, we consider the problem of computation of WB over a network. The flexibilities induced by the distributed setup make it suitable for problems involving large quantities of data with no centralized storage~\cite{boy11,ned16w,ned17e,Nedic2017a}. Mainly, we assume a group of agents is connected over a network, and each agent locally holds a probability distribution with finite support. The group seeks to compute the WB of all distributions in the network cooperatively. Figure~\ref{fig:erdos_sevens} shows an Erd\H{o}s-R\'enyi random graph with $160$ agents where each agent holds a sample of the digit $7$ from the MNIST dataset.
	
	\vspace{-0.4cm}
	\begin{figure}[ht!]
		\centering
		\begin{tikzpicture}
			[lineDecorate/.style={-,color={rgb:black,1;white,3}},%
			nodeDecorate/.style={shape=circle,inner sep=0.5pt,draw,fill=black},
			scale=0.4]
			\foreach \nodename/\x/\y in {
				1/2.31133198383253/-1.46462564759271,
				2/0.451323427544580/-0.615515896801182,
				3/0.0746895218230484/2.74368460099430,
				4/-3.62848741682743/-1.54430062937836,
				5/2.06788381720618/0.628927681037680,
				6/-1.21768024986900/1.69662992188793,
				7/0.134476550279833/-2.54377970629657,
				8/0.547304534313518/-2.82197690272210,
				9/0.642486796539330/3.79037297277031,
				10/4.69871439522411/-0.433967056801204,
				11/-0.856488433542214/3.24711472249647,
				12/2.55309675735800/1.14108542956293,
				13/4.24703446345803/-0.293411090513647,
				14/-1.56212944903663/-1.99485564237850,
				15/-1.52590343104238/-0.415044980942124,
				16/-2.22881689680023/-2.26965495080501,
				17/0.160862861051241/2.97908973149250,
				18/-0.522983969162077/-1.28989748521989,
				19/2.90318008255349/-0.149972926720722,
				20/1.28628617960708/-0.199957834007207,
				21/-0.354594040650827/2.94645632533678,
				22/0.809083729974355/-2.11795793113196,
				23/0.112511232504780/1.19187603847357,
				24/-0.350127385716271/1.74270316602521,
				25/1.80891051713915/0.660542678445591,
				26/0.914800429411922/3.21927034093369,
				27/-0.187958502490290/-1.37612532984229,
				28/1.73422384356358/-0.408935499423358,
				29/1.97110982875788/1.68005823566926,
				30/0.0421185406781108/0.680834854441936,
				31/-1.73531416319132/-0.429305717246832,
				32/4.71968261146830/-0.248305242837951,
				33/1.49588023604742/4.11230415582639,
				34/1.77341432839360/0.320308383298775,
				35/-0.844599759078817/1.97353356501457,
				36/2.46608706720094/3.23919874199739,
				37/-2.72286409117608/-2.02484488501393,
				38/0.755095119970475/-3.51062534357032,
				39/-3.12079056440620/-2.35385598142887,
				40/2.75047219591697/-1.76391330451792,
				41/-1.69171064655308/2.96707661977244,
				42/0.433705402231638/-1.48037135059837,
				43/0.446514834822805/0.0829235745051586,
				44/-0.599606030136086/-3.04085932523359,
				45/2.38382395912150/-1.05143094925770,
				46/-1.08290741211641/0.386429900198830,
				47/1.85785582304165/4.72289752118605,
				48/-4.36183031555337/-0.747045014812439,
				49/-0.309140215506251/-1.71126244865354,
				50/0.537230112665093/-1.73882796533511,
				51/-1.07304926306476/1.34169051337509,
				52/2.40771346280190/-0.114761783843960,
				53/-1.73834253522964/-1.78757627930449,
				54/1.53582472829470/-0.574384413089401,
				55/-0.549922544697098/2.63927267364263,
				56/-2.05838829466722/-3.02551906230334,
				57/0.854800415916429/2.56660416773585,
				58/-0.335852195726657/-0.951045826769841,
				59/1.43624564075637/3.65246221726961,
				60/0.705326121911779/-3.28327026272103,
				61/-1.71970347091663/0.765394514653884,
				62/1.13994939283807/0.477820087140481,
				63/-1.68248656825299/-0.768084628838691,
				64/-3.88528455904326/-1.36697031086018,
				65/-2.83302332616321/-1.67267896350514,
				66/-0.242903214462637/-0.463090479859048,
				67/-2.38143108701332/2.02341603329265,
				68/1.70032233987568/4.02204565124943,
				69/0.547438789385905/0.383400446672080,
				70/1.68580245902691/4.28773555900818,
				71/-3.87188633479924/-0.761729557169173,
				72/-1.94264891385218/1.72941869115030,
				73/-1.93573231390919/1.47704049643983,
				74/-1.03976402301755/-1.20164778377725,
				75/-0.485447514639244/1.96277089812535,
				76/-0.439817714433498/-2.55544782881305,
				77/0.190730286264780/-0.522340980924927,
				78/-1.42849011893164/1.71618141951939,
				79/1.68742219251062/2.63745106790359,
				80/-1.37426162398665/2.55721140155412,
				81/-1.57245637681011/-0.897310842270736,
				82/3.89695675165026/-0.248369549169562,
				83/-0.262489887278741/2.46126649372949,
				84/0.972127298304970/-2.26446629429443,
				85/1.99353178164566/2.78238157258855,
				86/0.649448808444266/-2.22373575182503,
				87/2.06617226422377/0.889742354454970,
				88/-1.51303124497049/-1.05847143851614,
				89/0.146769177288486/2.48562263400987,
				90/-3.53013807890508/-1.89746043517070,
				91/-0.751149824202475/-0.773891346928523,
				92/-1.82731860290192/-2.54241783144758,
				93/-1.26587207210971/0.553466360773759,
				94/-2.81203254156726/0.0827750061175550,
				95/0.280379198093749/-0.126388316442683,
				96/3.44179847970859/-0.212728985914711,
				97/1.63720727787554/-1.69560176764328,
				98/-0.129237039606143/-0.624531273648156,
				99/1.95037918810916/-0.385665065108770,
				100/1.75097061725556/1.40809186201381,
				101/-2.56548899454670/-2.64121911502280,
				102/-0.983620982700257/1.17852329513141,
				103/1.54994777640271/-0.973681300607485,
				104/2.32045161418362/0.818793708356591,
				105/-0.717403735736036/3.10146933180962,
				106/0.538453304185045/1.05236639513708,
				107/1.07056092999725/-1.13775496598041,
				108/1.59608101653632/0.491780756941001,
				109/1.41299755062284/2.21482162231438,
				110/-1.21319726961716/-0.0278123166417819,
				111/1.95953181211631/0.742813335343626,
				112/-0.146853540811108/3.66524176152126,
				113/1.20258913558083/-0.385598488761495,
				114/2.01884565060200/-0.554802810520709,
				115/-3.27020411764050/-0.308753832381973,
				116/-2.35087767867787/-0.0494700758640008,
				117/2.15007919972341/-1.70002082615080,
				118/0.897718808727636/-3.98532984557968,
				119/-0.317313646326113/-2.14152163758333,
				120/0.708242311283620/-0.0262544659603598,
				121/-1.32556289678617/1.31501579513088,
				122/0.818250526151831/2.74078113611380,
				123/-0.687416126410096/0.808443168018446,
				124/-1.67258516862329/-2.45093976926208,
				125/-1.27158256609629/1.46090584822714,
				126/0.759910569884217/-0.844315343579551,
				127/-2.10620925556653/-2.02575266380638,
				128/-0.747004357313296/-2.53914059840874,
				129/-1.82074227110332/2.13856932861792,
				130/-3.98170730340998/-2.13548462416644,
				131/-0.429651861029726/-3.07362593679426,
				132/1.05411722044178/2.98063139678602,
				133/-1.31929537367884/-1.52257953671742,
				134/2.30396129203470/-0.925710409390245,
				135/1.30083011438487/0.798239932187948,
				136/-0.745078669280549/-0.494299997116139,
				137/-1.15323344048913/-0.803882669220655,
				138/-0.0815965953303751/-2.74139867160865,
				139/-3.73818008212173/-0.194223916413393,
				140/0.601533780806690/-1.19812371775293,
				141/1.77593357999670/-1.31359027625481,
				142/2.60771016842952/0.696657400037764,
				143/0.197956636354943/-1.26662856999655,
				144/-2.53109716132149/-1.25980292482921,
				145/0.236405075087477/-1.86799318130503,
				146/-0.463370229973683/-0.652854073483294,
				147/-1.65883653323396/0.125201523658209,
				148/0.196330048365698/1.57009574258190,
				149/1.95905699382848/0.0863835227716690,
				150/1.52627845390408/1.01154515546266,
				151/-1.18679954578407/-1.29219301000165,
				152/-1.87558590710547/-2.36173843828338,
				153/1.31738526359039/-1.90480149591912,
				154/1.91237573313892/1.31608544114585,
				155/-3.17304336289121/-1.41382770991323,
				156/0.223593899201228/-1.03982218249704,
				157/-1.53011875098192/1.54365972633599,
				158/2.80256903233265/3.56264419010937,
				159/-0.101241220531483/-2.21501432303932,
				160/-2.03721245064823/-0.967072977471189}
			{
				\node (\nodename) at (\x,\y) [nodeDecorate] {};
			}
			\begin{pgfonlayer}{bg} 
			\path
			\foreach \startnode/\endnode in {
				1/1,
				1/40,
				1/45,
				1/97,
				1/134,
				2/2,
				2/77,
				3/3,
				3/55,
				3/83,
				3/122,
				4/4,
				4/155,
				5/5,
				5/25,
				5/28,
				5/29,
				5/142,
				6/6,
				6/24,
				6/61,
				6/80,
				7/7,
				7/84,
				7/128,
				8/8,
				8/38,
				8/60,
				8/145,
				9/9,
				9/59,
				9/112,
				10/10,
				10/13,
				11/11,
				11/80,
				11/112,
				12/12,
				12/87,
				13/13,
				13/32,
				13/96,
				14/14,
				14/124,
				14/151,
				14/152,
				15/15,
				15/81,
				15/110,
				16/16,
				16/53,
				16/101,
				17/17,
				17/89,
				18/18,
				18/58,
				19/19,
				19/52,
				20/20,
				20/34,
				20/99,
				20/103,
				20/150,
				20/156,
				21/21,
				21/83,
				22/22,
				22/50,
				23/23,
				23/30,
				23/148,
				24/24,
				24/35,
				24/75,
				24/83,
				24/106,
				25/25,
				25/62,
				25/104,
				26/26,
				26/122,
				27/27,
				27/66,
				27/159,
				28/28,
				28/52,
				28/126,
				28/141,
				29/29,
				29/79,
				30/30,
				30/95,
				30/106,
				30/123,
				31/31,
				31/46,
				31/137,
				31/144,
				31/147,
				32/32,
				33/33,
				33/59,
				34/34,
				34/87,
				35/35,
				35/55,
				35/121,
				36/36,
				36/85,
				36/158,
				37/37,
				37/39,
				37/65,
				37/90,
				37/92,
				37/144,
				38/38,
				38/118,
				39/39,
				40/40,
				41/41,
				41/80,
				42/42,
				42/50,
				42/156,
				43/43,
				43/62,
				43/66,
				43/69,
				44/44,
				44/76,
				45/45,
				45/114,
				46/46,
				46/51,
				46/66,
				46/121,
				47/47,
				47/70,
				48/48,
				48/71,
				49/49,
				49/76,
				49/119,
				49/145,
				49/146,
				50/50,
				50/143,
				51/51,
				51/75,
				51/78,
				51/157,
				52/52,
				52/96,
				52/108,
				52/114,
				53/53,
				53/74,
				53/127,
				54/54,
				54/113,
				55/55,
				55/105,
				56/56,
				56/92,
				57/57,
				57/89,
				57/109,
				57/132,
				58/58,
				58/77,
				58/126,
				58/137,
				58/151,
				59/59,
				59/68,
				59/70,
				59/79,
				60/60,
				61/61,
				61/73,
				61/110,
				61/116,
				62/62,
				62/120,
				62/135,
				62/150,
				63/63,
				63/74,
				63/116,
				63/151,
				64/64,
				64/71,
				64/90,
				65/65,
				65/144,
				66/66,
				66/143,
				67/67,
				67/72,
				68/68,
				69/69,
				70/70,
				71/71,
				71/115,
				72/72,
				72/121,
				73/73,
				73/129,
				74/74,
				74/81,
				74/133,
				74/156,
				75/75,
				75/89,
				76/76,
				76/131,
				77/77,
				77/98,
				77/120,
				78/78,
				79/79,
				79/109,
				80/80,
				80/129,
				81/81,
				81/160,
				82/82,
				82/96,
				83/83,
				84/84,
				84/97,
				85/85,
				85/109,
				86/86,
				86/153,
				86/159,
				87/87,
				87/150,
				88/88,
				88/137,
				89/89,
				90/90,
				90/130,
				91/91,
				91/146,
				92/92,
				92/128,
				93/93,
				93/123,
				93/147,
				94/94,
				94/116,
				95/95,
				95/113,
				95/146,
				96/96,
				97/97,
				97/103,
				98/98,
				98/146,
				99/99,
				99/134,
				99/149,
				100/100,
				100/150,
				101/101,
				102/102,
				102/123,
				102/125,
				103/103,
				104/104,
				105/105,
				106/106,
				106/108,
				107/107,
				107/126,
				108/108,
				108/111,
				108/149,
				109/109,
				109/122,
				109/150,
				110/110,
				110/146,
				111/111,
				112/112,
				113/113,
				113/114,
				114/114,
				115/115,
				115/116,
				115/139,
				116/116,
				117/117,
				117/141,
				118/118,
				119/119,
				119/159,
				120/120,
				121/121,
				121/125,
				122/122,
				123/123,
				124/124,
				125/125,
				126/126,
				126/140,
				127/127,
				128/128,
				128/159,
				129/129,
				130/130,
				131/131,
				132/132,
				133/133,
				134/134,
				135/135,
				136/136,
				136/146,
				137/137,
				138/138,
				138/159,
				139/139,
				140/140,
				140/143,
				141/141,
				141/153,
				142/142,
				143/143,
				143/145,
				144/144,
				144/155,
				145/145,
				145/156,
				146/146,
				147/147,
				148/148,
				149/149,
				150/150,
				150/154,
				151/151,
				152/152,
				153/153,
				154/154,
				155/155,
				156/156,
				157/157,
				158/158,
				159/159,
				160/160}
			{
				(\startnode) edge[lineDecorate] node {} (\endnode)
			};
		\node at (-4,3) {\cfbox{red}{\includegraphics[scale=0.8]{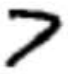}}};
		\node at (-6,-1) {\cfbox{red}{\includegraphics[scale=0.8]{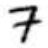}}};
		\node at (5,2) {\cfbox{red}{\includegraphics[scale=0.8]{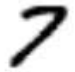}}};
		\node at (5,-3) {\cfbox{red}{\includegraphics[scale=0.8]{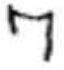}}};
		\draw[red,dashed] (-5.3,-1.5) -- (-4.4,-0.7);
		\draw[red,dashed] (-5.3,-0.4) -- (-4.4,-0.7);
		\draw[red,dashed] (4.2,-2.2) -- (2.8,-1.8);
		\draw[red,dashed] (4.2,-3.8) -- (2.8,-1.8);
		\draw[red,dashed] (-3.2,3.8) -- (-2.5,2.1);
		\draw[red,dashed] (-3.2,2.2) -- (-2.5,2.1);
		\draw[red,dashed] (4.1,1.2) -- (2.1,1.7);
		\draw[red,dashed] (4.1,2.8) -- (2.1,1.7);
			\end{pgfonlayer}
			\end{tikzpicture}
		\caption{Erd\H{o}s-R\'enyi random graph where each agent privately holds a sample of the digit $7$ from the MNIST dataset.} 
		\label{fig:erdos_sevens}
	\end{figure}
	
	\vspace{-0.4cm}
	Distributed consensus with the Wasserstein metric was introduced in~\cite{Bishop2014a,Bishop2014}. In~\cite{Bishop2014}, the authors showed asymptotic converge to the WB of the initial distributions given some weak connectivity assumptions on the graph over which agents exchange information. Nevertheless, the proposed algorithm requires that each agent computes an exact WB of local distributions at each iteration. Although one can have closed-form solutions for some families of continuous distributions, in general, the problem can be intractable. On the other hand, a recent approach~\cite{Staib2017} explores the computational advantages of a dual formulation of the WB and exploits the parallelizable structure of the problem to propose a scalable, and communication-efficient algorithm for its computation on arbitrary continuous distributions. Nevertheless, it requires a central fusion center that coordinates the actions of the parallel machines.
	
	In contrast with existing literature~\cite{Bishop2014,Staib2017}, \dd{we} propose a first-order algorithm that can be executed distributedly over a network with unknown topology. We derive an explicit convergence rate of the order $O(1/k^2)$ with an additional cost that depends on the condition number of the graph over which the agents interact. Additionally, we present two numerical examples to illustrate and validate our results. First, we show some basic properties of the algorithm for the problem of computing WB of univariate, discrete and truncated Gaussian distributions. Then, we show the result of applying our algorithm to a subset of the MNIST digit database on a large-scale network of $1000$ agents.
	
	This paper is organized as follows. Section~\ref{sec:problem} presents basic definitions for the problem of computation of WB over networks. Section~\ref{sec:results} states auxiliary results, introduces the proposed algorithm and states our main results on its convergence rate and dependency on the problem parameters. Section~\ref{sec:numerical} shows two numerical examples that experimentally verify the theoretical properties of the algorithm. Conclusions and future work are presented in Section~\ref{sec:conclusions}. 
	
	\textbf{Notation:} 
	We assume that the agents are indexed from $1$ through $m$. The enumeration is not needed in the execution of the proposed algorithm. It is only used in the analysis. Superscripts $i$ or $j$ denote agent indices and subscript $k$ \dd{defines} iteration indices. $[A]_{ij}$ denotes the $i$-th row and $j$-th column entry of a matrix $A$. $I_{n}$ is the identity matrix of size $n$. \cu{For a linear operator $A:E\to H$, where $E$ and $H$ are two finite-dimensional real vector spaces with duals $E^*$ and $H^*$, and norms $\|\cdot\|_E$ and $\|\cdot\|_H$ respectively, we define its norm as $\|A\|_{E\to H} =\max_{x \in E,u \in H^*} \{ \left< u,Ax\right> \mid \|x\|_E =1, \|u\|_{H^*} = 1 \}$}. 
	\cu{For matrices $A$ and $B$, $A \circ  \dd{B}$ and $A/B$ stands for the element-wise product and division, respectively. We use {\bf{1}} to denote a column vector with all entries equal to $1$. }
	
	\section{Problem Statement}\label{sec:problem}
	
	In this section, we recall some basic definitions of the optimal transport problem. We describe the Wasserstein distance and the WB of a set of discrete probability distributions with finite support. Finally, we introduce the problem of distributed computation of WB over networks.
	
	\subsection{Entropy Regularized Optimal Transport}
	
	Consider two probability distributions $p,q \in S_1(n)$ with support on a finite set of points $\{x_i \in \mathbb{R}^d \}_{i=1}^n$ such that $p(x_i) = p_i$ and $q(x_i) = q_i$, where $S_1(n)  = \{ p \in \mathbb{R}_+^n  \mid p^T \boldsymbol1 =1 \}$. Moreover, consider a non-negative symmetric matrix \mbox{$M$}, where $[M]_{ij}\in \mathbb{R}_+$ accounts for the cost of moving mass from $p_i$ to bin $q_j$. Without loss of generality, in the numerical example we will consider the Euclidean costs where $[M]_{ij} = \|x_i - x_j\|_2^2$. Additionally, define the set of couplings or \textit{transportation polytope} $U(p,q)$  as
	
	\vspace{-0.3cm}
	{\small
		\begin{align*}
		U(p,q) & \triangleq \left\lbrace X \in \mathbb{R}_+^{n \times n} \mid X \boldsymbol{1} = p, X^T\boldsymbol{1} = q \right\rbrace. 
		\end{align*}
	}
	
	\vspace{-0.5cm}
	The entropy-regularized OT problem~\cite{Cuturi2013} seeks to minimize the transportation costs while maximizing the entropy (maximum-entropy principle) and is defined as
	
	\vspace{-0.4cm}
	{\small\begin{align}\label{eq:wass_distance}
		\mathcal{W}_\gamma (p,q) &\triangleq \min_{X \in U(p,q)} \left\lbrace \left\langle  M,X\right\rangle - \gamma E(X)\right\rbrace,
		\end{align}}
	\vspace{-0.4cm}
	
	\noindent where $\gamma>0$, and
	
	\vspace{-0.4cm}
	{\small\begin{align}
		\left\langle M,X \right\rangle & \triangleq  \sum_{i=1}^{n} \sum_{j=1}^{n} M_{ij} X_{ij} \ \ 
		\text{and} \ \
		E(X) \triangleq - \sum_{i=1}^{n} \sum_{j=1}^{n} h(X_{ij}), \nonumber
		\end{align}}
	
	\vspace{-0.3cm}
	\noindent and $\forall x>0, h(x) \triangleq x\log x$ and $h(0) \triangleq 0$. A solution $\mathcal{W}_0(p,q)$ is called the Wasserstein distance between $p$ and $q$ \dd{and if $\gamma >0$, $\mathcal{W}_\gamma(p,q)$ is known as regularized (or smoothed) Wasserstein distance. For $\gamma >0$}, problem~\dd{\eqref{eq:wass_distance}} is \dd{strongly} convex and admits a unique solution \dd{$X^*$}. 
	
	\dd{For simplicity, let us introduce the notation $\mathcal{W}_{\gamma,q}(p)$ for fixed probability distribution $q \in S_1(n)$}
	
	\vspace{-0.6cm}
	\begin{align*}
	\mathcal{W}_{\gamma,q}(p) \triangleq  \mathcal{W}_{\gamma}(p,q).
	\end{align*}
	
	\vspace{-0.2cm}
	One particular advantage of entropy-regularizing the Wasserstein distance is that there exists closed-form representations for the dual problem and its gradients~\cite{Agueh2011,Cuturi2016} where the Fenchel-Legendre transform of~\eqref{eq:wass_distance} is defined as
	
	\vspace{-0.4cm}
	{\small
		\begin{align}\label{eq:dual_wass}
		\mathcal{W}^*_{\gamma,q}(y) & \triangleq \max_{p \in S_1(n)}\left\lbrace  \left\langle y,p\right\rangle  - \dd{\mathcal{W}_{\gamma,q} (p)}\right\rbrace .
		\end{align}
	}
	
	\vspace{-0.4cm}
	\cu{In \cite{Blondel2017}, other regularization functions were explored. The squared $2$-norm was shown to produce sparse transportation plans. In this paper, we will use the entropy regularization. Nevertheless, our results extend naturally to other regularization functions, especially those that admit closed-form solution of dual gradients. The next theorem states the closed-form solutions of the dual problem, and the gradient of the entropy regularized WB problem.}
	
	\vspace{-0.2cm}
	\begin{theorem}[Theorem $2.4$ in~\cite{Cuturi2016}]\label{thm:cuturi}
		For $\gamma >0$, the Fenchel-Legendre dual function $\mathcal{W}^*_{\gamma,q}(y)$ is differentiable and its gradient $\nabla \mathcal{W}^*_{\gamma,q}(y)$ is $1/\gamma$-Lipschitz \dd{in the 2-norm} with
		
		\vspace{-0.4cm}
		{\small
			\begin{align*}
			\mathcal{W}^*_{\gamma,q}(y) & = \gamma\left(E(q) + \left\langle q, \log K \alpha \right\rangle  \right) \ \ \text{and}\\
			\nabla \mathcal{W}^*_{\gamma,q}(y) & = \alpha \circ \left(K \cdot {q}/({K \alpha}) \right) \in S_1(n),
			\end{align*}
		}
		
		\vspace{-0.5cm}
		\noindent    where $y \in \mathbb{R}^n$, $\alpha = \exp( {y}/{\gamma}) $ and \mbox{$K = \exp( {-M}/{\gamma }) $}.
	\end{theorem}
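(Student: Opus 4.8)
The plan is to compute the conjugate in closed form first and then read off both the gradient and its Lipschitz constant. I would start by substituting the definition~\eqref{eq:wass_distance} of $\mathcal{W}_{\gamma,q}(p)$ into~\eqref{eq:dual_wass}, turning the inner minimization over $X$ into a joint maximization:
\begin{align*}
\mathcal{W}^*_{\gamma,q}(y) = \max_{p\in S_1(n)}\;\max_{X\in U(p,q)}\Big\{\langle y,p\rangle-\langle M,X\rangle+\gamma E(X)\Big\}.
\end{align*}
Since $p=X\boldsymbol{1}$, writing $\langle y,p\rangle=\sum_{i,j}y_iX_{ij}$ and noting that $X^T\boldsymbol{1}=q$ with $q\in S_1(n)$ already forces $\boldsymbol{1}^TX\boldsymbol{1}=1$ and $X\ge0$, the outer constraint $p\in S_1(n)$ is redundant. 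Hence the problem reduces to maximizing $\sum_{i,j}\{(y_i-M_{ij})X_{ij}-\gamma h(X_{ij})\}$ over $X\ge0$ subject to $X^T\boldsymbol{1}=q$ only.

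The key simplification is that this objective and the surviving constraint decouple across the columns of $X$. First I would solve each column problem by Lagrangian duality: introducing a multiplier for $\sum_iX_{ij}=q_j$ and setting the derivative in $X_{ij}$ to zero gives $\gamma\log X_{ij}=(y_i-M_{ij})-\gamma-\lambda_j$, i.e. $X^*_{ij}=\alpha_iK_{ij}c_j$ with $\alpha_i=\exp(y_i/\gamma)$, $K_{ij}=\exp(-M_{ij}/\gamma)$, and $c_j$ fixed by the marginal constraint as $c_j=q_j/(K\alpha)_j$ (using that $K$ is symmetric). This yields the unique optimizer $X^*_{ij}=\alpha_iK_{ij}q_j/(K\alpha)_j$.

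Back-substitution then gives the stated value: since $(y_i-M_{ij})-\gamma\log X^*_{ij}=-\gamma\log c_j$, the objective collapses to $-\gamma\sum_j(\sum_iX^*_{ij})\log c_j=-\gamma\sum_jq_j\log(q_j/(K\alpha)_j)=\gamma(E(q)+\langle q,\log K\alpha\rangle)$. Because $\mathcal{W}_{\gamma,q}$ is strongly convex, the maximizer in~\eqref{eq:dual_wass} is unique and $\nabla\mathcal{W}^*_{\gamma,q}(y)=p^*=X^*\boldsymbol{1}$; computing $p^*_i=\alpha_i\sum_jK_{ij}q_j/(K\alpha)_j$ reproduces $\alpha\circ(K\cdot q/(K\alpha))$, and summing over $i$ gives $\sum_ip^*_i=\sum_jq_j=1$ with $p^*\ge0$, so $p^*\in S_1(n)$. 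Equivalently one may differentiate the closed form directly.

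The remaining and most delicate step is the Lipschitz bound, which I would obtain by bounding the Hessian. Differentiating $\mathcal{W}^*_{\gamma,q}(y)=\gamma\sum_jq_j\log\big(\sum_iK_{ji}e^{y_i/\gamma}\big)$ twice, each term is $\gamma$ times a log-sum-exp, whose Hessian is $\gamma^{-2}$ times the covariance matrix $\operatorname{diag}(\pi_{j\cdot})-\pi_{j\cdot}\pi_{j\cdot}^T$ of the categorical distribution $\pi_{jl}=K_{jl}\alpha_l/(K\alpha)_j$. Thus $\nabla^2\mathcal{W}^*_{\gamma,q}(y)=\tfrac1\gamma\sum_jq_j(\operatorname{diag}(\pi_{j\cdot})-\pi_{j\cdot}\pi_{j\cdot}^T)$. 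Each summand is the covariance of a random standard basis vector, hence positive semidefinite and, for any unit $v$, satisfies $v^T(\operatorname{diag}(\pi_{j\cdot})-\pi_{j\cdot}\pi_{j\cdot}^T)v\le\sum_l\pi_{jl}v_l^2\le\|v\|_2^2=1$, so it is dominated by the identity. Averaging with the weights $q_j$ (which sum to one) gives $0\preceq\nabla^2\mathcal{W}^*_{\gamma,q}(y)\preceq\tfrac1\gamma I$, i.e. the gradient is $1/\gamma$-Lipschitz in the $2$-norm. The main obstacle is precisely recognizing the categorical-covariance structure and establishing the spectral bound $\operatorname{diag}(\pi_{j\cdot})-\pi_{j\cdot}\pi_{j\cdot}^T\preceq I$; everything else is direct computation.
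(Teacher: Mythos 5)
Your proof is correct, and it is worth noting at the outset that the paper does not actually prove this statement: it is imported verbatim as Theorem~$2.4$ of~\cite{Cuturi2016}, so the only meaningful comparison is with the cited argument and with the machinery the paper invokes elsewhere. Your derivation of the closed forms --- rewriting the conjugate as a joint maximization over $X\ge 0$ with $X^T\boldsymbol{1}=q$ after observing that the constraint $p\in S_1(n)$ is implied, decoupling across columns, solving each column problem by Lagrangian stationarity to get $X^*_{ij}=\alpha_i K_{ij}q_j/(K\alpha)_j$ (using symmetry of $K$), and back-substituting via the identity $(y_i-M_{ij})-\gamma\log X^*_{ij}=-\gamma\log c_j$ --- is essentially the computation underlying the cited theorem, and your bookkeeping checks out, including the normalization $\sum_i p^*_i=\sum_j q_j=1$. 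Where you genuinely depart is the smoothness claim. The standard route, which this paper itself gestures at in Section~III-B via~\cite{Kakade2009}, is the strong-convexity/smoothness conjugacy: $\mathcal{W}_{\gamma,q}$ inherits $\gamma$-strong convexity with respect to $\|\cdot\|_1$ from the entropy under partial minimization (the marginal map $X\mapsto X\boldsymbol{1}$ is nonexpansive from entrywise $\ell_1$ to $\ell_1$), hence its conjugate has a $1/\gamma$-Lipschitz gradient from $\ell_\infty$ to $\ell_1$, which implies the stated $2$-norm bound since $\|\cdot\|_\infty\le\|\cdot\|_2\le\|\cdot\|_1$. Your alternative --- differentiating $\gamma\sum_j q_j\log(K\alpha)_j$ twice and bounding each categorical covariance $\operatorname{diag}(\pi_{j\cdot})-\pi_{j\cdot}\pi_{j\cdot}^T\preceq I$ --- is elementary and self-contained (no conjugacy theorem needed), and it yields the two-sided estimate $0\preceq\nabla^2\mathcal{W}^*_{\gamma,q}(y)\preceq\gamma^{-1}I$ for free; your factor bookkeeping ($\gamma$ in front times $\gamma^{-2}$ from the chain rule) is consistent with the final $\gamma^{-1}$. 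The price of your route is that it is specific to the entropic regularizer, whereas the conjugate-duality argument covers any $\gamma$-strongly convex regularization, which matters for the paper's remark that its results extend to other regularizers such as those of~\cite{Blondel2017}. The only loose end, worth one clause in a polished write-up, is the degenerate column $q_j=0$: there the interior stationarity argument does not apply, but the optimal column is identically zero and all of your formulas remain valid with the convention $h(0)=0$.
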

	
	\vspace{-0.1cm}
	We will use the result\dd{s of}  Theorem~\ref{thm:cuturi} to design an algorithm for the computation of the WB on graphs based \dd{on} recent ideas of dual approaches for convex optimization problems with affine constrains~\cite{all17,Gasnikov2016} and optimal algorithms for distributed optimization~\cite{Uribe2017,uribe2018dual}.

	\subsection{Computation of a Wasserstein Barycenter over a Network}
	
	The uniform WB~\cite{Agueh2011,Cuturi2014} of a family of discrete distributions $q_i\in S_1(n)$, for $i=1,\ldots,m$; is defined as the solution to the following optimization problem
	
	\vspace{-0.4cm}
	{\small
		\dd{\begin{align}\label{w_barycenter}
			\min_{p \in S_1(n)} \sum\limits_{i=1}^{m} \mathcal{W}_{\gamma,q_i}(p).
			\end{align}}}
	
	\vspace{-0.3cm}
	The WB is an extension of the Euclidean barycenter to nonlinear metric spaces \dd{corresponding} to the empirical Fr\'{e}chet mean~\cite{Frechet1948}. The existence and uniqueness of WB has been studied in the literature~\cite{Bigot2016}. Problem~\eqref{w_barycenter} is strictly convex and admits a unique solution, denoted by $p^*$~\cite{Cuturi2016}.
	
	\dd{For the distributed computation of WBs, let us introduce the stacked column vectors} ${{{\mathtt{p}} = [p_1^T,\cdots,p_m^T]^T}}$ and ${{{\mathtt{q}} = [q_1^T,\cdots,q_m^T]^T}}$, where 
	\dd{$p_i,q_i \in S_1(n)$}, and rewrite the problem \eqref{w_barycenter} in an equivalent form
	
	\vspace{-0.4cm}
	{\small
		\dd{\begin{align}\label{w_barycenter2}
			\min_{\substack{p_1=\cdots=p_m \\ p_1,\dots,p_m \in S_1(n)}}  ~ \sum\limits_{i=1}^{m} \mathcal{W}_{\gamma,q_i}(p_i).
			\end{align}}
	}
	
	
	\vspace{-0.3cm}
	We denote the unique solution of~\eqref{w_barycenter2} by \mbox{$\mathtt{p}^* = [(p^*)^T,\cdots,(p^*)^T]^T$}.
	
	We seek to solve problem~\eqref{w_barycenter2} in a distributed manner over a network, where each distribution $q_i$ is held by an agent $i$ on a network. We model such a network as a fixed \textit{connected undirected graph} \mbox{$\mathcal{G} = (V,E)$}, where 
	$V$ is the set of $m$ nodes, and $E$ is a set of edges. We assume that the graph $\mathcal{G}$ does not have self-loops. The network structure imposes information constraints; specifically, each node $i$ has access to $q_i$ only and a node can exchange information only with its immediate neighbors, i.e., a node $i$ can communicate with node $j$ if and only if $(i,j)\in E$. 
	
	We can represent the communication constraints imposed by the network by introducing 
	a set equivalent to the constraints in~\eqref{w_barycenter2}.
	To do so, we define the Laplacian matrix 
	$\bar W{\in \mathbb{R}^{m\times m}}$ of the graph $\mathcal{G}$ by
	{\small\begin{align*}
		[\bar W]_{ij} = \begin{cases}
		-1,  & \text{if } (i,j) \in E,\\
		\text{deg}(i), &\text{if } i= j, \\
		0,  & \text{otherwise,}
		\end{cases}
		\end{align*}}
	
	\vspace{-0.3cm}
	\noindent where $\text{deg}(i)$ is the degree of the node $i$, i.e., the number of neighbors of the node. \cu{Moreover, we denote as $d_{\max}$ and $d_{min}$ the maximum and \dd{the} minimum degree among all nodes in the network.}
	Finally, define the communication matrix (also referred to as an interaction matrix) 
	by \mbox{$W \triangleq\bar W \otimes I_n$}, where $\otimes$ indicates the Kronecker product.
	
	Throughout the paper, {\it we assume that graph $\mathcal{G} = (V,E)$ is undirected and connected}.
	Under this assumption, the Laplacian matrix $\bar W$ is symmetric and positive \dd{semidefinite}. Furthermore,
	the vector $\boldsymbol{1}$ is the unique (up to a scaling factor) eigenvector associated with the eigenvalue
	$\lambda=0$. $W$ inherits the properties of $\bar W$, including symmetry and positive semidefiniteness. Moreover, $W{x} = 0$ if and only if {$x_1 = \cdots = x_m$}, and $\sqrt{W}{x} = 0$ if and only if {$x_1 = \cdots = x_m$}.
	Therefore, one can equivalently rewrite problem~\eqref{w_barycenter2} as
	
	\vspace{-0.3cm}
	{\small
		\dd{    \begin{align}\label{consensus_problem2}
			\min_{\substack{p_1,\dots, p_m \in S_1(n) \\ \sqrt{W} \mathtt{p}=0 }} ~ \mathcal{W}_{\gamma,\mathtt{q}}(\mathtt{p}) = \sum\limits_{i=1}^{m} \mathcal{W}_{\gamma, q_i}(p_i) .
			\end{align}}
	}
	
	Note that the constraint set \dd{$ \{p_1,\dots, p_m \in S_1(n) \mid \sqrt{W} \mathtt{p}=0\}$}
	is 
	the same as the set \dd{$\{p_1,\dots, p_m \in S_1(n) \mid p_1 = \cdots  = p_m\}$},
	since
	\mbox{$\ker (\sqrt{W}) = \spn(\boldsymbol{1})$} due to the connectivity of the graph $\mathcal{G}$. 
	
	In the next section, we state the proposed algorithm for solving~\eqref{consensus_problem2} and analyze its convergence rate.
	
	\section{Algorithm and Results}\label{sec:results}
	
	In this section, we build on recent results on dual approaches for optimal distributed optimization~\cite{Uribe2017,ani17} to construct an algorithm that solves~\eqref{consensus_problem2} over a network. Moreover, we analyze its convergence rates and provide explicit dependencies on the problem parameters and the network topology.

	\subsection{Dual Approach for Strongly Convex Functions and Affine Constraints}
	\label{S:DualApprMain}
	In~\cite{ani17}, the authors proposed a novel analysis for the minimization of strongly convex functions with affine constraints of the form
	
	\vspace{-0.9cm}
	\begin{align}\label{eq:linear}
	\min_{Ax=0}f(x),
	\end{align}
	where $f(x)$ is $1$-strongly convex with respect to the \mbox{$p$-norm} with the corresponding dual problem defined as
	
	\vspace{-0.5cm}
	\begin{align}
	\label{eq:dual_to_linear}
	\min_y g(y)  \ \ \ \text{where}  \ \ g(y) = \max_x \{\left\langle A^Ty,x \right\rangle -f(x)\}.
	\end{align}
	
	\vspace{-0.2cm}
	\pd{We denote $x^*(A^Ty)$ the solution to the problem defining $g(y)$.}
	The dual function $g(y)$ is $L$-smooth with \mbox{\cu{$L = \|A\|_{L^1 \to L^2}= \max_{i=1,\cdots,m}\|A_i\|_2^2$}}, where $A_i$ is the $i$-th column of $A$. Thus, one can use accelerated first order methods such as Nesterov's Fast Gradient~\cite{nes83} or one of its recent reformulations~\cite{all14} to obtain an approximate solution. The novelty in~\cite{ani17} lies in the statement of the convergence rate of the accelerated methods in terms of the duality gap and the constraint violation. Additionally, it was shown that for the linear coupling accelerated algorithm~\cite{all17} one can guarantee that the solutions will remain in a closed ball around the optimal solution, with a radius proportional to the distance between the initial point of the algorithm and the optimal solution. Next, we state a technical result, based on~\cite{ani17}, that will help us in the design and analysis of our proposed algorithm for the distributed computation of the WB.
	
	\vspace{-0.3cm}
	\begin{theorem}
		\label{thm:gasnikov}
		The fast gradient method based on linear coupling proposed in~\cite{all14} with the change $y_k$ to $w_k$ and $x_k$ to $y_k$ and applied to \pd{problem~\eqref{eq:dual_to_linear}, with $w_0 = y_0 = z_0 = 0$}, has the following properties: 
		$ \dd{\forall ~} k \geq N$ and $\varepsilon >0$, it holds that
		\begin{align*}
		g(\pd{w_k}) + f(\breve x_k) \leq \varepsilon \qquad \text{and} \qquad \|A\breve x_k\|_{\pd{2}}\leq  \varepsilon / R,
		\end{align*}
		where \cu{ $
			\breve{x}_k = \sum_{t=0}^{k-1} \frac{(t+2)}{k(k+3)} x^*(A^Ty_{t+1})
			$}, $N \triangleq \sqrt{\pd{16} \cu{L} R^2/\varepsilon}$, \cu{$R = \|y^*\|_2 < \infty$} and $y^*$ is the optimal point of $g(\cdot)$ with minimal norm.
		
	\end{theorem}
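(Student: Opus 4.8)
The plan is to prove the statement by specializing the dual-acceleration analysis of~\cite{ani17} to the pair~\eqref{eq:linear}--\eqref{eq:dual_to_linear}, combining three ingredients: strong duality, the $O(LR^2/k^2)$ convergence rate of the linear-coupling method of~\cite{all14} on the smooth dual $g$, and a primal reconstruction that converts dual progress into simultaneous bounds on the duality gap and the constraint residual. Since $g(y)=\max_x\{\langle A^Ty,x\rangle-f(x)\}$ is the Fenchel conjugate of $f$ composed with $A^T$ and $f$ is $1$-strongly convex, $g$ is finite and differentiable with $\nabla g(y)=Ax^*(A^Ty)$, and strong duality gives $g(y^*)=-f(x^*)$, i.e.\ $f(x^*)+g(y^*)=0$. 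This identity is precisely what makes $g(w_k)+f(\breve x_k)$ a \emph{relaxed} duality gap: were $\breve x_k$ feasible and optimal it would reduce to the dual suboptimality $g(w_k)-g(y^*)$.

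First I would record the accelerated rate. Because $g$ is $L$-smooth with $L=\max_i\|A_i\|_2^2$ and the method is initialized at $w_0=y_0=z_0=0$, the linear-coupling scheme of~\cite{all14} yields $g(w_k)-g(y^*)\le cLR^2/k^2$ with $R=\|y^*\|_2$; the boundedness property of~\cite{all17} quoted before the theorem guarantees that all iterates stay in a ball of radius proportional to $R$ about $y^*$, which lets me carry out the reconstruction over a ball of radius $2R$ about the origin. The constant $16$ in $N=\sqrt{16LR^2/\varepsilon}$ is exactly the product of the accelerated-rate constant and the $(2R)^2$ coming from this radius, so that $16LR^2/k^2\le\varepsilon$ whenever $k\ge N$.

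Next comes the heart of the argument, the primal reconstruction. Since each gradient equals $\nabla g(y_{t+1})=Ax^*(A^Ty_{t+1})$, the weighted average $\breve x_k=\sum_{t=0}^{k-1}\frac{t+2}{k(k+3)}x^*(A^Ty_{t+1})$ aggregates exactly the points whose images are the gradients combined by the method, with the weights $\frac{t+2}{k(k+3)}$ dictated by the telescoping coefficients of the linear-coupling Lyapunov analysis. I would take the estimate-sequence inequality produced by that analysis — a linear lower model of $g$ valid for all $y$ in the radius-$2R$ ball — and dualize it through the conjugacy between $f$ and $g$ to obtain a certificate of the form
\begin{align*}
f(\breve x_k)+\langle y,A\breve x_k\rangle+g(w_k)\le \frac{16LR^2}{k^2}\qquad\text{for all }\|y\|_2\le 2R.
\end{align*}
Setting $y=0$ gives the duality-gap bound $g(w_k)+f(\breve x_k)\le 16LR^2/k^2\le\varepsilon$ for $k\ge N$, while maximizing the left-hand side over the ball isolates $2R\|A\breve x_k\|_2$ and, using the matching lower bound on $g(w_k)+f(\breve x_k)$ from Fenchel--Young, yields a constraint residual of order $LR^2/(k^2R)$, hence $\|A\breve x_k\|_2\le\varepsilon/R$ once $k\ge N$.

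The main obstacle is this last step: obtaining an \emph{upper} bound on the signed quantity $g(w_k)+f(\breve x_k)$ for a generally \emph{infeasible} $\breve x_k$ (for which $A\breve x_k\ne 0$ and $f(\breve x_k)$ may fall below the optimal value $f(x^*)$), and simultaneously extracting the $1/R$-scaled residual bound from the same certificate. The finiteness of $R=\|y^*\|_2$ and the minimal-norm choice of $y^*$ are essential here: they bound the radius over which the reconstruction certificate must hold and thereby fix both the constant $16$ and the scaling $\varepsilon/R$ of the feasibility guarantee.
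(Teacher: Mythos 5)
Your proposal is correct and follows essentially the same route as the paper, whose ``proof'' is merely a pointer to Theorem 2 of \cite{dvu16} and the proof of Theorem 1 of \cite{dvu16b}: those references contain exactly the primal--dual linear-coupling argument you sketch, namely the estimate-sequence certificate $g(w_k)+f(\breve x_k)-\langle y, A\breve x_k\rangle \le O(LR^2/k^2)$ valid over the ball $\|y\|_2\le 2R$, specialized at $y=0$ for the gap and at the maximizing $y$ for the residual. The one point you should make explicit is that the ``matching lower bound'' must come from Fenchel--Young applied at the minimal-norm dual solution $y^*$, giving $g(w_k)+f(\breve x_k)\ge g(y^*)+f(x^*)+\langle y^*,A\breve x_k\rangle \ge -R\|A\breve x_k\|_2$; applying it instead at $w_k$, whose norm may be as large as $2R$, would cancel the $2R\|A\breve x_k\|_2$ term in your certificate exactly and render the feasibility bound vacuous.
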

	\pd{
		\begin{proof}
			The proof consists in combining Theorem 2 in \cite{dvu16} and proof of Theorem 1 in \cite{dvu16b}.
		\end{proof}
	}
	\pd{\subsection{Dual Approach for Wasserstein Barycenter Problem}}
	\label{S:DualWasser}
	\dd{Problem~\eqref{consensus_problem2} can be equivalently reformulated as the maximization problem
		\vspace{-0.2cm}
		{\small
			\begin{align*}
			\max_{\substack{p_1,\dots, p_m \in S_1(n) \\ \sqrt{W} \mathtt{p}=0 }}-\mathcal{W}_{\gamma,\mathtt{q}}(\mathtt{p}) = \sum\limits_{i=1}^{m} \mathcal{W}_{\gamma, q_i}(p_i)
			\end{align*}}
		\vspace{-0.2cm}
		\noindent  with its corresponding Lagrangian dual problem
		{\small
			\begin{align*}
			\min_\cu{\mathtt{y}} ~\max_{p_1,\dots, p_m \in S_1(n) } ~ \left\lbrace\langle \mathtt{y}, \sqrt{W}\mathtt{p}\rangle-\mathcal{W}_{\gamma,\mathtt{q}}(\mathtt{p})\right\rbrace,
			\end{align*}}
		where $\mathtt{y} = [y_1^T \cdots y_m^T]^T$. \\
		~\\
		Moreover, the Fenchel-Legendre transform of $\mathcal{W}_{\gamma,q_i}(p_i)$ is 
		{\small
			\begin{align*}
			\mathcal{W}^*_{\gamma,q_i}([\sqrt{W}\mathtt{y}]_i) & = \max_{p_i \in S_1(n)}  \left\lbrace \left\langle [\sqrt{W}\mathtt{y}]_i,p_i \right\rangle -\mathcal{W}_{\gamma,q_i}(p_i) \right\rbrace,
			\end{align*}}
		\noindent where $[\sqrt{W}y]_i=\sum_j^m \sqrt{W}_{ij}y_j$, and $\sqrt{W}_{ij} = [\sqrt{\bar W}]_{ij} \otimes I_n$. Therefore, we can rewrite the problem~\eqref{consensus_problem2} as follows
		\vspace{-0.2cm}
		{\small
			\begin{align}\label{dual_problem22}
			\min_{\mathtt{y}} \mathcal{W}^*_{\gamma,\mathtt{q}}(\sqrt{W}\mathtt{y}) = \sum_{i=1}^{m} \mathcal{W}^*_{\gamma, q_i}([\sqrt{W}\mathtt{y}]_i).
			\end{align}}}
	
	\vspace{-0.2cm}
	Additionally, from Theorem~\ref{thm:cuturi} the gradient can be expressed in closed form as 
	
	\vspace{-0.4cm}
	{\small\begin{align*}
		\nabla \mathcal{W}^*_{\gamma,q_i}\left([\sqrt{W}y]_i\right) & = \sum_{j=1}^{m}\sqrt{W}_{ij} p^*_j\left([\sqrt{W}y]_j\right), 
		\end{align*}}
	
	\vspace{-0.3cm}
	\noindent where $p^*_j(\tilde y_j)  = \alpha(\tilde y_j) \circ ( K \cdot { q_i}/{(K \alpha(\tilde y_j) )} )$. Moreover, it holds that one can recover the solution $\mathtt{p}^*$ to the primal problem~\eqref{consensus_problem2} from a solution $\mathtt{y}^*$ to the dual problem~\eqref{dual_problem22} as $
	\mathtt{p}^* = \mathtt{p}^*(\sqrt{W}\mathtt{y}^* )$.
	
	The optimality relation between the dual and the primal problem follows from Theorem $3.1$ in~\cite{Cuturi2016}. In general, the dual problem~\eqref{dual_problem22} can have multiple solutions of the form $\mathtt{y}^* + \ker(\sqrt{W})$ when the matrix $\sqrt{W}$ does not have a full row rank. When the solution is not unique, we {\it will use $\mathtt{y}^*$ to denote the smallest norm solution}, and we let $R$ be its norm, i.e. $R = \|\mathtt{y}^*\|_2$.
	
	\cu{The entropy regularization term is $\gamma$-strongly convex with respect to the $1$-norm over the probability simplex $S_1(n)$}. As a consequence, the computation of the WB of a set of discrete probability distributions $\{q_i\}_{i=1}^m$ is equivalent to solving the dual decomposable $L$-smooth (with respect to the $2$-norm) optimization problem~\eqref{dual_problem22} with \mbox{$\cu{L = \|\sqrt{W}\|^2_{L^1 \to L^2}}/\gamma$}~\cite{Kakade2009}. Specifically, in this setup it holds that
	
	\vspace{-0.3cm}
	{\small \begin{align*}
		\|\sqrt{W}\|^2_{L^1 \to L^2} &=\max_{i=1,\cdots,m}\|\sqrt{W}_i\|_2^2 = \dd{\max_{i=1,\cdots,m}\sqrt{W}_i^T\sqrt{W}_i}\\
		&= \max_{i=1,\cdots,m} [W]_{ii}= d_{\max}.
		\end{align*}}
	
	
	
	\vspace{-0.2cm}
	\pd{\subsection{Algorithm and Main Results}}
	\label{S:AlgAndMain}
	We can explicitly write the Nesterov's Accelerated Gradient Method (FGM)~\cite{nes13} for smooth functions. Particularly, we use follow the linear coupling approach recently proposed in~\cite{all14}. Setting $\hat{\mathtt{w}}_{k} = \hat{\mathtt{z}}_{k}=\hat{\mathtt{y}}_{k} = \boldsymbol{0}$, the FGM generates iterates according to:
	
	\vspace{-0.4cm}
	{\small
		\begin{subequations}\label{eq:nesterov}
			\begin{align}
			\hat{\mathtt{y}}_{k+1} & = \tau_k \hat{\mathtt{z}}_{k} + (1 - \tau_k) \hat {\mathtt{w}}_{k}\\
			\hat {\mathtt{w}}_{k+1} & = \hat{\mathtt{y}}_{k+1}  - \cu{\frac{1}{L }} \sqrt{W} \mathtt{p}^*\left(\sqrt{W} \hat{\mathtt{y}}_{k+1} \right) \\
			\hat {\mathtt{z}}_{k+1} & = \hat{\mathtt{z}}_{k}  - \alpha_{k+1} \sqrt{W} \mathtt{p}^*\left(\sqrt{W} \hat{\mathtt{y}}_{k+1}  \right)
			\end{align}
	\end{subequations}}
	where $\alpha_{k+1} = (k+2)/(2L)$ and $\tau_k = 2/(k+2)$.
	
	Unfortunately, algorithm~\eqref{eq:nesterov} cannot be executed in a distributed manner. Although the entries of local gradient vectors can be computed independently by each node, the sparsity pattern of the matrix $\sqrt{W}$ need not be the same as the communication constraints induced by the graph $\mathcal{G}$. Thus, the variables $\hat {\mathtt{w}}_{k}$ and $\hat {\mathtt{z}}_{k}$ cannot be computed on the network. This problem is solved by a change of variables such that $\tilde{ \mathtt{y}} = \sqrt{W}\hat{\mathtt{y}}$, $\tilde{ \mathtt{w}} = \sqrt{W}\hat{\mathtt{w}}$ and $\tilde{ \mathtt{z}} = \sqrt{W}\hat{\mathtt{z}}$.
	
	Algorithm~\ref{alg:main} presents the resulting distributed accelerated gradient method for the dual problem of the WB problem.

	\begin{algorithm}[ht]
		\caption{Distributed Computation of WB}
		\label{alg:main}
		\begin{algorithmic}[1]
			\dd{\REQUIRE Each agent $i\in V$ is assigned its distribution $q_i$.}
			\STATE{All agents set $\tilde w_0^i = \tilde y_0^i = \tilde z_0^i = \boldsymbol{0} \in \mathbb{R}^n$ and $N$}
			\STATE{For each agent $i \in V$:}
			\FOR{ $k=0,1,2,\cdots,N\dd{-1}$ }
			\STATE{$\tau_k = \frac{2}{k+2} \  \text{and} \  \alpha_{k+1} = \frac{k+2}{2}\cu{\frac{1}{L }}  $}
			\STATE{$\tilde y_{k+1}^i = \tau_k\tilde z_k^i + (1-\tau_k) \tilde w_k^i$}
			\STATE{$p^*_i(\tilde{y}^i_{k+1}) \scriptstyle{= \exp(\tilde y_{k+1}^i/ \gamma )\circ\left( K \cdot \frac{q_i}{ K\exp(\tilde y_{k+1}^i/\gamma)  } \right)  }$}
			\STATE{Share $p^*_i(\tilde{y}^i_{k+1})$ with $\{j \mid (i,j) \in E \}$}
			\STATE{$\tilde w_{k+1}^i  = \tilde y_{k+1}^i - \cu{\frac{1}{L }} \sum_{j=1}^{m} W_{ij} p^*_j(\tilde y^j_{k+1})$ }
			\STATE{$\tilde z^i_{k+1} = \tilde z^i_k -\alpha_{k+1} \sum_{j=1}^{m} W_{ij} p^*_j(\tilde y^j_{k+1})$}
			\ENDFOR
			\STATE{\pd{
					Set $( y_N^*)_i = \tilde w^i_N $, $\forall i\in V$        
			}}
			\STATE{\pd{
					Set $(p^*_N)_i = \sum_{k=0}^{N-1} \frac{(k+2)}{N(N+3)}p^*_i(\tilde y^i_{k+1})$, $\forall i\in V$       
			}}
		\end{algorithmic}
	\end{algorithm}
	
	\dd{Based on \cite{ani17}, we can guarantee that Algorithm \eqref{alg:main} generates sequences of vectors $\{\tilde y^i_k,\tilde w^i_k,\tilde z^i_k\}$ which remain in a ball $B_R(0)$ with $R=\|\tilde y^i_0 - \tilde y_i^*\|_2 = \|\tilde y_i^*\|_2$}. Now, we are ready to state our main result that provides a convergence rate for Algorithm~\ref{alg:main} with explicit dependencies on the problem parameters and the topology of the network. 
	
	

	\vspace{-0.3cm}
	\begin{theorem}\label{thm:main}
		Let $\varepsilon>0$ and assume that  $\|\nabla \mathcal{W}^*_{\gamma,\mathtt{q}}(\tilde{\mathtt{y}})\|_2 \leq \dd{G} $ on a ball $B_R(0)$. Then, it holds that that after 
		
		\vspace{-0.2cm}
		{\small 
			\begin{align*}
			N & \geq \sqrt{ \frac{16 G^2}{\gamma \cdot \varepsilon}  \frac{d_{\max}}{d_{\min}} }
			\end{align*}
		}
		
		\vspace{-0.2cm}
		\noindent iterations, the outputs of Algorithm~\ref{alg:main}, i.e. $\mathtt{p}^*_N = [(p_N^*)_1^T,\cdots,(p_N^*)_m^T]^T$ and $\mathtt{y}^*_N = [(y_N^*)_1^T,\cdots,(y_N^*)_m^T]^T$ have the following properties: 
		\begin{align*}
		\mathcal{W}_{\gamma,\mathtt{q}}(\mathtt{p}^*_N)+ \mathcal{W}^*_{\gamma,\mathtt{q}}(\mathtt{y}^*_N)  
		\leq  \varepsilon \ \ \text{and} \ \ \|\sqrt{W}\mathtt{p}^*_N\|_2 \leq {\varepsilon}/{R}.
		\end{align*} 
	\end{theorem}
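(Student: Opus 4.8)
The plan is to recognize problem~\eqref{consensus_problem2} as an instance of the abstract affinely-constrained problem~\eqref{eq:linear} and then to invoke Theorem~\ref{thm:gasnikov} essentially verbatim, converting each of its two guarantees into one of the two claimed inequalities. Concretely, I would set $A=\sqrt{W}$ and $f=\mathcal{W}_{\gamma,\mathtt{q}}$. The only discrepancy with the hypotheses of Theorem~\ref{thm:gasnikov} is that $f$ is $\gamma$-strongly convex (with respect to the $1$-norm over the product simplex) rather than $1$-strongly convex; this is absorbed entirely into the smoothness constant of the dual through the standard conjugacy between strong convexity and smoothness~\cite{Kakade2009}, giving, as computed in Section~\ref{S:DualWasser}, $L=\|\sqrt{W}\|^2_{L^1\to L^2}/\gamma=d_{\max}/\gamma$. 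Under this identification the dual $g$ of Theorem~\ref{thm:gasnikov} is exactly $\mathcal{W}^*_{\gamma,\mathtt{q}}(\sqrt{W}\cdot)$ of~\eqref{dual_problem22}.

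The second step is to certify that Algorithm~\ref{alg:main} is the method of Theorem~\ref{thm:gasnikov} written in the transformed coordinates $\tilde{\mathtt{y}}=\sqrt{W}\hat{\mathtt{y}}$, $\tilde{\mathtt{w}}=\sqrt{W}\hat{\mathtt{w}}$, $\tilde{\mathtt{z}}=\sqrt{W}\hat{\mathtt{z}}$. Premultiplying recursion~\eqref{eq:nesterov} by $\sqrt{W}$ turns every occurrence of $\sqrt{W}\mathtt{p}^*(\sqrt{W}\hat{\mathtt{y}}_{k+1})$ into $W\mathtt{p}^*(\tilde{\mathtt{y}}_{k+1})$, whose $i$-th block $\sum_j W_{ij}p^*_j$ uses only the neighbors of node $i$; this reproduces lines~5--9 and shows the method is executable on $\mathcal{G}$. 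I would also note that, since $\hat{\mathtt{y}}_0=\hat{\mathtt{w}}_0=\hat{\mathtt{z}}_0=0$ and every increment lies in $\mathrm{range}(\sqrt{W})$, all iterates remain in $\mathrm{range}(\sqrt{W})$, on which $\sqrt{W}$ is injective; hence the change of variables is a bijection on the relevant subspace and the transformed iterates inherit the guarantees of Theorem~\ref{thm:gasnikov} unchanged. Under the resulting correspondence $w_N\leftrightarrow\mathtt{y}^*_N=\tilde{\mathtt{w}}_N$ and $\breve{x}_N\leftrightarrow\mathtt{p}^*_N=\sum_{k=0}^{N-1}\frac{k+2}{N(N+3)}\mathtt{p}^*(\tilde{\mathtt{y}}_{k+1})$, so the two conclusions $g(w_N)+f(\breve{x}_N)\leq\varepsilon$ and $\|A\breve{x}_N\|_2\leq\varepsilon/R$ read precisely as $\mathcal{W}_{\gamma,\mathtt{q}}(\mathtt{p}^*_N)+\mathcal{W}^*_{\gamma,\mathtt{q}}(\mathtt{y}^*_N)\leq\varepsilon$ and $\|\sqrt{W}\mathtt{p}^*_N\|_2\leq\varepsilon/R$, which are the claims; the second requires no further work.

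It then remains only to convert the iteration count $N=\sqrt{16LR^2/\varepsilon}=\sqrt{16 d_{\max}R^2/(\gamma\varepsilon)}$ supplied by Theorem~\ref{thm:gasnikov} into the stated $\sqrt{16 G^2 d_{\max}/(\gamma\varepsilon d_{\min})}$, i.e.\ to show $R^2=\|\mathtt{y}^*\|_2^2\leq G^2/d_{\min}$. This is the crux. The intended route is to use that $\mathtt{y}^*$, being the minimal-norm dual solution, lies in $\mathrm{range}(\sqrt{W})$ and that at optimality $\sqrt{W}\,\mathtt{p}^*(\sqrt{W}\mathtt{y}^*)=0$; combining the first-order optimality relation with the uniform gradient bound $\|\nabla\mathcal{W}^*_{\gamma,\mathtt{q}}\|_2\leq G$ on $B_R(0)$ and the spectral behaviour of $\sqrt{W}$ on $\mathrm{range}(\sqrt{W})$ should control $\|\mathtt{y}^*\|_2$ by $G$ divided by the relevant smallest positive singular value of $\sqrt{W}$. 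Packaging the operator norm $\|\sqrt{W}\|^2_{L^1\to L^2}=d_{\max}$ together with this lower contraction estimate is exactly what produces the graph condition-number factor $d_{\max}/d_{\min}$. I expect this last bound --- relating the dual radius $R$ to the primal gradient bound $G$ through the extreme degrees of $\mathcal{G}$ --- to be the main obstacle, since it is the only place where the network topology enters the rate and where the passage between the $1$-norm, in which $f$ is strongly convex, and the $2$-norm, in which the dual is smooth and the iterates are measured, must be handled carefully.
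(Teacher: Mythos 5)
Your proposal follows the paper's own proof essentially step for step: the paper likewise instantiates Theorem~\ref{thm:gasnikov} for the dual problem~\eqref{dual_problem22} with smoothness constant $L = d_{\max}/\gamma$ (the $\gamma$-strong convexity of the entropic regularizer in the $1$-norm yielding $(d_{\max}/\gamma)$-smoothness of the dual, with $\|\sqrt{W}\|^2_{L^1\to L^2}=d_{\max}$ as computed in Section~\ref{S:DualWasser}), obtaining both displayed guarantees for $k \geq \sqrt{16\, d_{\max} R^2/(\gamma\varepsilon)}$. The estimate you single out as the crux, $R^2 \leq G^2/d_{\min}$, is resolved in the paper precisely along the lines you sketch --- boundedness of the dual gradient combined with the minimal gain of $\sqrt{W}$ on the complement of $\ker(\sqrt{W})$ --- except that it is not derived from scratch but imported from Theorem~3 of~\cite{lan17}, which gives $R^2 \leq \|\nabla \mathcal{W}^*_{\gamma,\mathtt{q}}(\mathtt{y}^*)\|_2^2 \big/ \min\left\{ \langle u, \sqrt{W}x\rangle \right\} = G^2/d_{\min}$, so your argument is complete once that citation (or an equivalent derivation of the minimal-norm dual-solution bound) is supplied.
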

	
	\begin{proof}
		The dual function $\mathcal{W}^*_{\gamma,\mathtt{q}}(\mathtt{y})$ is $(\cu{d_{\max}}/\gamma)$-smooth. Thus, from Theorem~\ref{thm:gasnikov} it follows that
		\begin{align*}
		\mathcal{W}_{\gamma,\mathtt{q}}(\mathtt{p}^*_N)+ \mathcal{W}^*_{\gamma,\mathtt{q}}(\mathtt{y}^*_N)  
		\leq  \varepsilon \ \ \text{and} \ \ \|\sqrt{W}\mathtt{p}^*_N\|_2 \leq {\varepsilon}/{R}.
		\end{align*} 
		holds for $k \geq \sqrt{16 \cu{d_{\max} }R^2/(\gamma \varepsilon)}$. \\Moreover, 
		\dd{considering the boundedness of the gradients of the dual function and by Theorem~$3$ in~\cite{lan17}, we can estimate the radius as}      
		\dd{{\small \begin{align*}
				R^2 \leq \frac{\|\nabla \mathcal{W}^*_{\gamma,\mathtt{q}}(\mathtt{y}^*)\|_2^2}{ \cu{\min\limits_{\substack{ \scriptscriptstyle{x \in E \perp \ker(\sqrt{W}),u \in H^* }\\ \scriptscriptstyle {\|x\|_E =1, \|u\|_{H^*} = 1} } } \{ \langle u,\sqrt{W}x \rangle  \}}} =\frac{ G^2}{d_{\min}}.
				\end{align*}} }
		Thus, we require $ k \geq {\scriptscriptstyle \sqrt{\frac{16 \cu{G}^2}{\gamma \cdot \varepsilon}\cu{\frac{d_{\max}}{d_{\min}} }}}$ and the desired result follows.
	\end{proof}

	Theorem~\ref{thm:main} provides an estimate of the minimum number of iterations required for the proposed algorithm to reach some arbitrary relative accuracy in the solution of the distributed WB problem. The convergence rate is shown to be of the order $O(1/k^2)$ which has been established to be optimal for smooth convex optimization problems \cite{nes13} with an additional cost proportional to the \cu{square root of the number of agents in the network in the worst case}.
	
	In general, one might be interested in finding a WB for the original Wasserstein distance with no regularization term. That is, to solve problem~\eqref{w_barycenter} with $\gamma=0$. The next theorem explains a choice of $\gamma$ that provides a convergence rate results with respect to the non-regularized optimal transport based on the iterates of Algorithm~\eqref{alg:main}.
	
	\vspace{-0.2cm}
	\begin{theorem}
		Let $\varepsilon>0$, and assume that  $\|\nabla \mathcal{W}^*_{\gamma,\mathtt{q}}(\tilde{\mathtt{y}})\|_2 \leq \dd{G} $ on a ball $B_R(0)$. Moreover, set $\gamma = \varepsilon/(4\pd{ m} \log n)$. 
		\dd{Then, it holds that after }
		
		\vspace{-0.3cm}
		{\small \begin{align*}
			N &\geq \sqrt{\frac{\pd{128} \dd{G}^2 \pd{m} \log n}{ \varepsilon^2} \frac{d_{\max}}{d_{\min}} }
			\end{align*}}
		
		\vspace{-0.2cm}
		\noindent iterations, the outputs of Algorithm~\ref{alg:main}, i.e. $\mathtt{p}^*_N = [(p_N^*)_1^T,\cdots,(p_N^*)_m^T]^T$ and $\mathtt{y}^*_N = [(y_N^*)_1^T,\cdots,(y_N^*)_m^T]^T$ have the following properties
		{\small
			\begin{align*}
			\mathcal{W}_{0,\mathtt{q}}(\mathtt{p}^*_N)- \pd{\mathcal{W}_{0,\mathtt{q}}(\mathtt{p}^*)}
			\leq  \varepsilon \ \ \text{and} \ \ \|\sqrt{W}\mathtt{p}^*_N\|_2 \leq {\varepsilon}/{(\pd{2}R)}.
			\end{align*} 
		}
	\end{theorem}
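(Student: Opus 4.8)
The plan is to read this statement as Theorem~\ref{thm:main} applied to the entropy-regularized problem together with a deterministic \emph{regularization bias}, and to split the target accuracy $\varepsilon$ into two halves: one half absorbed by the optimization error of Algorithm~\ref{alg:main} on the $\gamma$-regularized problem, and the other half by the bias, whose magnitude is governed by the choice of $\gamma$.

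First I would prove the uniform sandwich estimate
\[
\mathcal{W}_{\gamma,\mathtt{q}}(\mathtt{p}) \le \mathcal{W}_{0,\mathtt{q}}(\mathtt{p}) \le \mathcal{W}_{\gamma,\mathtt{q}}(\mathtt{p}) + 2m\gamma\log n,
\]
valid for every $\mathtt{p}=[p_1^T,\dots,p_m^T]^T$ with $p_i\in S_1(n)$. The left inequality is termwise: the minimizing plan of $\mathcal{W}_{0,q_i}(p_i)$ is feasible for $\mathcal{W}_{\gamma,q_i}$ and $-\gamma E(X)\le 0$, since the entropy of a coupling is nonnegative. For the right inequality I would substitute the optimal unregularized plan $X_0$ into the regularized objective, giving $\mathcal{W}_{\gamma,q_i}(p_i)\ge \mathcal{W}_{0,q_i}(p_i)-\gamma E(X_0)$, then bound $E(X_0)\le 2\log n$ (the maximal entropy of a coupling in $U(p_i,q_i)$, attained at the product $p_i\otimes q_i$), and sum over the $m$ agents.

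Next I would invoke Theorem~\ref{thm:main} with accuracy $\varepsilon'=\varepsilon/2$, which after $N\ge \sqrt{16 G^2 d_{\max}/(\gamma\varepsilon'\,d_{\min})}$ iterations yields $\mathcal{W}_{\gamma,\mathtt{q}}(\mathtt{p}^*_N)+\mathcal{W}^*_{\gamma,\mathtt{q}}(\mathtt{y}^*_N)\le \varepsilon/2$ and $\|\sqrt{W}\mathtt{p}^*_N\|_2\le \varepsilon/(2R)$; the second bound is already the claimed feasibility estimate. For the objective I chain three facts: (i) the right sandwich inequality, $\mathcal{W}_{0,\mathtt{q}}(\mathtt{p}^*_N)\le \mathcal{W}_{\gamma,\mathtt{q}}(\mathtt{p}^*_N)+2m\gamma\log n$; (ii) weak duality, which gives $\mathcal{W}^*_{\gamma,\mathtt{q}}(\mathtt{y}^*_N)\ge -\mathcal{W}_{\gamma,\mathtt{q}}(\mathtt{p}^*)$ because $\langle\sqrt{W}\hat{\mathtt{y}},\mathtt{p}^*\rangle=\langle\hat{\mathtt{y}},\sqrt{W}\mathtt{p}^*\rangle=0$ at the feasible optimum, hence $\mathcal{W}_{\gamma,\mathtt{q}}(\mathtt{p}^*_N)\le \varepsilon/2+\mathcal{W}_{\gamma,\mathtt{q}}(\mathtt{p}^*)$; and (iii) the left sandwich inequality at the optimum, $\mathcal{W}_{\gamma,\mathtt{q}}(\mathtt{p}^*)\le \mathcal{W}_{0,\mathtt{q}}(\mathtt{p}^*)$. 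Collecting these gives $\mathcal{W}_{0,\mathtt{q}}(\mathtt{p}^*_N)-\mathcal{W}_{0,\mathtt{q}}(\mathtt{p}^*)\le \varepsilon/2+2m\gamma\log n$, and substituting $\gamma=\varepsilon/(4m\log n)$ turns the bias into exactly $\varepsilon/2$, for a total of $\varepsilon$; the same substitution together with $\varepsilon'=\varepsilon/2$ collapses the iteration threshold to $\sqrt{128 G^2 m\log n\, d_{\max}/(\varepsilon^2 d_{\min})}$.

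The one genuinely delicate step is (ii): since $\mathtt{p}^*_N$ is only approximately feasible, I cannot read primal suboptimality directly off the duality gap and must route the bound through weak duality and the regularized optimum $\mathtt{p}^*$, using that $\mathtt{y}^*_N=\sqrt{W}\hat{\mathtt{w}}_N$ lies in the transformed dual space so that $\mathcal{W}^*_{\gamma,\mathtt{q}}(\mathtt{y}^*_N)$ is a genuine dual value. I would also verify that the gradient bound $G$ on $B_R(0)$ and the radius estimate $R^2\le G^2/d_{\min}$ inherited from Theorem~\ref{thm:main} hold uniformly, since $\gamma$ now scales with $\varepsilon$; granting these, the remainder is bookkeeping with the entropy bound and the explicit constants.
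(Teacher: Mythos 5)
Your proposal is correct and takes essentially the same route as the paper's proof: invoke Theorem~\ref{thm:main} with accuracy $\varepsilon/2$, use weak duality $\mathcal{W}^*_{\gamma,\mathtt{q}}(\mathtt{y}^*_N)\ge -\mathcal{W}_{\gamma,\mathtt{q}}(\mathtt{p}^*)$ to get $\mathcal{W}_{\gamma,\mathtt{q}}(\mathtt{p}^*_N)-\mathcal{W}_{\gamma,\mathtt{q}}(\mathtt{p}^*)\le \varepsilon/2$, then absorb the remaining $\varepsilon/2$ as regularization bias via $\gamma=\varepsilon/(4m\log n)$ and the per-agent entropy bound $0\le E(X)\le 2\log n$, which reproduces the constant $128$ exactly as in the paper. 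One cosmetic slip: substituting the unregularized optimizer $X_0$ into the regularized objective gives $\mathcal{W}_{\gamma,q_i}(p_i)\le \mathcal{W}_{0,q_i}(p_i)-\gamma E(X_0)$, so your right-hand sandwich inequality should instead be read off the regularized optimizer $X_\gamma$ (or from the uniform bound $E(X)\le 2\log n$ over $U(p_i,q_i)$, which you already cite) --- this does not affect the argument, and in fact your sandwich carries the correct orientation for the paper's convention $E(X)\ge 0$, whereas the paper's own two displayed inequalities have the $\varepsilon/(2m)$ slack and the one-sided comparison attached to the opposite points.
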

	
	\begin{proof}
		\dd{Considering \pd{weak duality 
				$\mathcal{W}^*_{\gamma,\mathtt{q}}(\mathtt{y}^*_N)  \geq - \mathcal{W}_{\gamma,\mathtt{q}}(\mathtt{p}^*)$}
			and Theorem \ref{thm:main} with $\varepsilon/2$, to obtain
			{\small
				\begin{equation}
				\label{eq:duality}
				\mathcal{W}_{\gamma,\mathtt{q}}(\mathtt{p}_N^*) - \mathcal{W}_{\gamma,\mathtt{q}}(\mathtt{p}^*) \leq \varepsilon/2.
				\end{equation}
			}
			\pd{By the choice of $\gamma$,} for $\forall ~ i=1,...,m$, it holds that
			{\small
				\pd{ \begin{align*}
					\mathcal{W}_{\gamma,q_i}(p_i^*) - \mathcal{W}_{0,q_i}(p_i^*)  \leq {\varepsilon}/{(2m)}, \\
					\mathcal{W}_{\gamma,q_i}((p^*_N)_i) \geq \mathcal{W}_{0,q_i}((p^*_N)_i).
					\end{align*}}}
			\pd{Thus, the desired result follows from~\eqref{eq:duality},} and the two inequalities above.
		}
	\end{proof}

	\section{Numerical Experiments}\label{sec:numerical}
	
	In this section, we show two numerical experiments to validate the results of Theorem~\ref{thm:main}. We explore the problem of computing WB of univariate, discrete and truncated Gaussian densities and the computation of the WB of a subsample of $1000$ digit images from the MNIST dataset.  
	
	\subsection{Barycenter of Gaussian Distributions}
	
	Initially, we explore the computation of WB for sets of univariate, discretized and truncated Gaussian densities~\cite{Agueh2011}. We consider a network of agents where each agent $i$ holds a univariate, discretized and truncated Gaussian distribution, with mean  $\mu_i \in [-5,5]$, standard deviation $\sigma \in [0.1,2]$ and equally spaced support of $100$ points  in $[-5,5]$. The entropy regularization parameter is set to $\gamma = 0.1$. Figure~\ref{fig:results} shows the distance to optimality and the distance to consensus of Algorithm~\ref{alg:main} for the star, cycle, complete, and Erd\H{o}s-R\'enyi random graph graphs with a fixed size of $50$ nodes. Also,  Figure~\ref{fig:results} shows the scalability of the algorithm, i.e., the number of iterations required to reach an $\varepsilon$ accuracy in the distance to optimality and consensus for networks of increasing size.

\begin{figure}[htbp!]
	\vspace{-0.1cm}
	\centering
	\subfigure{
	\begin{tikzpicture}
		\node at (1,0.2)  {{\footnotesize {\color{black} \textbf{--}} Erd\H{o}s-R\'enyi}};
		\node at (0.84,0.5) {{\footnotesize {\color{red} \textbf{--}} Complete}};
		\node at (0.6,0.8) {{\footnotesize {\color{green} \textbf{--}} Cycle}};
        \node at (0.5,1.1) {{\footnotesize {\color{blue} \textbf{--}} Star}};
		\node at (1.5,2.2) {\small $e^*(\mathtt{y}^*_k)$};
		\begin{axis}[
        ticklabel style = {font=\tiny},
		width=4.7cm,height=3.5cm,scale=0.99,
		x label style={at={(axis description cs:0.5,0.15)},anchor=north,font=\small},
		y label style={at={(axis description cs:0.25,.5)},anchor=south},
		ymode = log,
        xmode = log,
		ymin = 1e-7, ymax=1e0,
		xmin = 1e1, xmax=1e5, 
		every axis plot/.append style={line width=1pt}],
		legend pos=south west;
		\addplot [black]     	table [x index=8,y index=0]{new_data_conv.dat};
		\addplot [red]     		table [x index=8,y index=2]{new_data_conv.dat};
        \addplot [green]     	table [x index=8,y index=4]{new_data_conv.dat};
		\addplot [blue]     	table [x index=8,y index=6]{new_data_conv.dat};
		\end{axis}
		\end{tikzpicture}
	\begin{tikzpicture}
	\node at (1.7,2.4) {\scriptsize $\argmin\limits_{k\geq 0} \{ k \mid e^*(\mathtt{y}^*_k) \leq \varepsilon_1\}$};
		\begin{axis}[
        ticklabel style = {font=\tiny},
		width=5.0cm,height=3.5cm,scale=0.99,
		x label style={at={(axis description cs:0.5,0.15)},anchor=north,font=\small},
		ymin = 1e0, ymax=0.06e7,
		xmin = 1, xmax=50, 
		every axis plot/.append style={line width=1pt}],
		legend pos=south west;
		\addplot [black]     	table [x index=8,y index=4]{new_data.dat};
		\addplot [red]     		table [x index=8,y index=5]{new_data.dat};
        \addplot [green]     	table [x index=8,y index=6]{new_data.dat};
		\addplot [blue]     	table [x index=8,y index=7]{new_data.dat};
		\end{axis}
		\end{tikzpicture}
	}
	\\
	\vspace{-0.5cm}
    	\subfigure{
    	\begin{tikzpicture}
		\node at (1.6,2.2) {\small $\|\sqrt{W}\mathtt{p}^*_k\|_2$};
		\begin{axis}[
        ticklabel style = {font=\tiny},
		width=4.7cm,height=3.5cm,scale=0.99,
		x label style={at={(axis description cs:0.5,0.15)},anchor=north,font=\small},
		y label style={at={(axis description cs:0.25,.5)},anchor=south},
		xlabel={Iterations},
		ymode = log,
        xmode = log,
		ymin = 1e-7, ymax=1e0,
		xmin = 1e1, xmax=1e5, 
		every axis plot/.append style={line width=1pt}],
		legend pos=south west;
		\addplot [black]     	table [x index=8,y index=1]{new_data_conv.dat};
		\addplot [red]     		table [x index=8,y index=3]{new_data_conv.dat};
        \addplot [green]     	table [x index=8,y index=5]{new_data_conv.dat};
		\addplot [blue]     	table [x index=8,y index=7]{new_data_conv.dat};
		\end{axis}
		\end{tikzpicture}
        \begin{tikzpicture}
	\node at (1.7,2.4) {\scriptsize $\argmin\limits_{k\geq 0} \{ k \mid \|\sqrt{W}\mathtt{p}^*_k\|_2 \leq \varepsilon_2\}$};
	\begin{axis}[
    ticklabel style = {font=\tiny},
	width=5.0cm,height=3.5cm,scale=0.99,
			x label style={at={(axis description cs:0.5,0.15)},anchor=north,font=\small},
			xlabel={Number of Agents},
	ymin = 1e1, ymax=0.35e6,
	xmin = 1, xmax=50, 
	every axis plot/.append style={line width=1pt}],
	legend pos=south west;
		\addplot [black]     	table [x index=8,y index=0]{new_data.dat};
		\addplot [red]     		table [x index=8,y index=1]{new_data.dat};
        \addplot [green]     	table [x index=8,y index=2]{new_data.dat};
		\addplot [blue]     	table [x index=8,y index=3]{new_data.dat};
	\end{axis}
		\end{tikzpicture}
	}
\vspace{-0.3cm}
	\caption{Optimality and Scalability for various graphs. $e^*(\mathtt{y}^*_k) = (\mathcal{W}^*_{\gamma,\mathtt{q}}(\mathtt{y}^*_k) - \mathcal{W}^*_{\gamma,\mathtt{q}}(\mathtt{y}^*))/ (\mathcal{W}^*_{\gamma,\mathtt{q}}(\mathtt{y}^*_0) - \mathcal{W}^*_{\gamma,\mathtt{q}}(\mathtt{y}^*))$, $\varepsilon_1=1\cdot 10^{-8}$ and $\varepsilon_2=1\cdot 10^{-6}$.}
	\label{fig:results}
	\vspace{-0.5cm}
\end{figure}
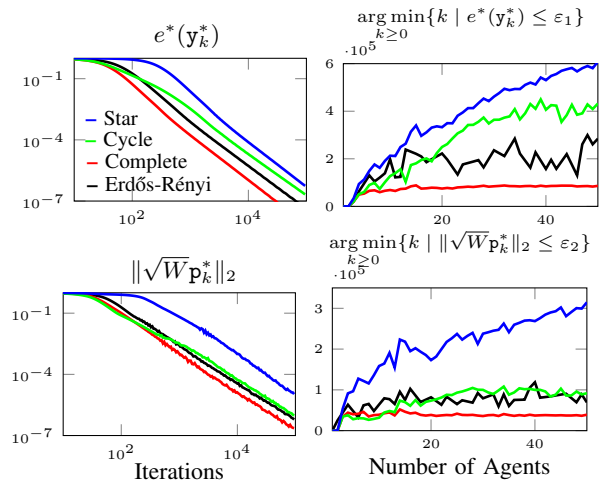

	\subsection{MNIST Dataset}
	
	We randomly sample $1000$ images for each digit of the MNIST dataset~\cite{LeCun1998,LeCun1998a}. Each image has $28\times28$ pixels and is scaled uniformly at random between $0.5$ and $2$ of its size and randomly located on a larger $56\times56$ blank image. The pixel values of the image are normalized to add up to $1$. We assign one sample from each digit to each agent on a group of $1000$ agents, and the objective is to jointly compute the WB for each digit of the $1000$ samples present in the network. \dd{Each agent owns only one image, and these images are different. In total the number of images assigned to each agent is equal to the number of digits.} The agents are connected over an Erd\H{o}s-R\'enyi random graph with $1000$ nodes and connectivity parameter $4/1000$. The entropy regularization parameter is set to $\gamma = 0.01$. Figure~\ref{fig:mnist} shows the local barycenter of the $9$ digits for a subset of $3$ agents in the network for $0$, $60$, and $300$ iterations. As the number of iteration increases, all agents converge to a common image, namely, the WB of the images held by the agents in the network.
	
	\vspace{-0.1cm}
	\begin{figure}[hth!]
		\centering
		\begin{tikzpicture}
		\node[rotate=90] at (0,0) { $\ \ \ \ N=0$};
		\end{tikzpicture}
		\fbox{\includegraphics[trim={1.3cm 1cm 1.1cm 5.76cm},clip,width=0.35\textwidth]{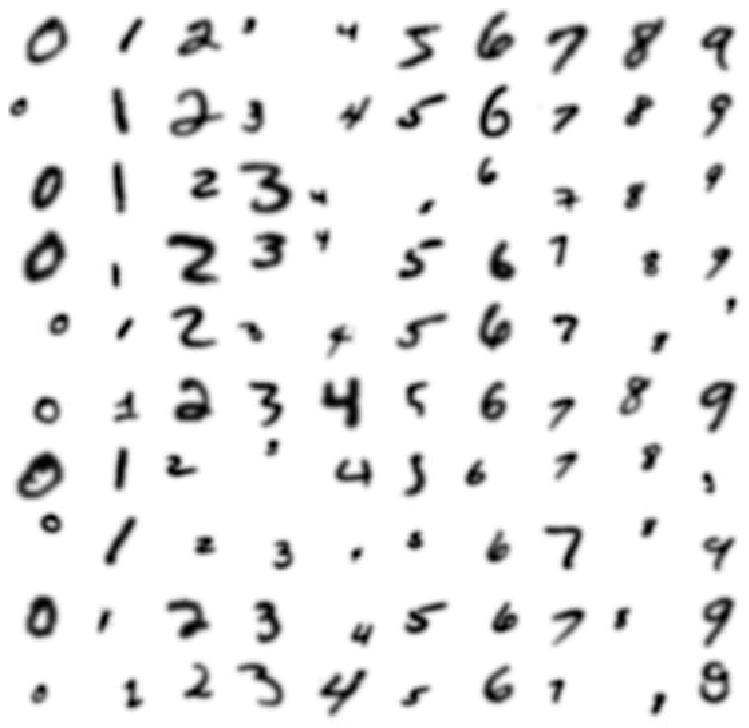}}
		\\
		\begin{tikzpicture}
		\node[rotate=90] at (0,0) { $\ \ \  N=60$};
		\end{tikzpicture}
		\fbox{\includegraphics[trim={1.3cm 1cm 1.1cm 5.76cm},clip,width=0.35\textwidth]{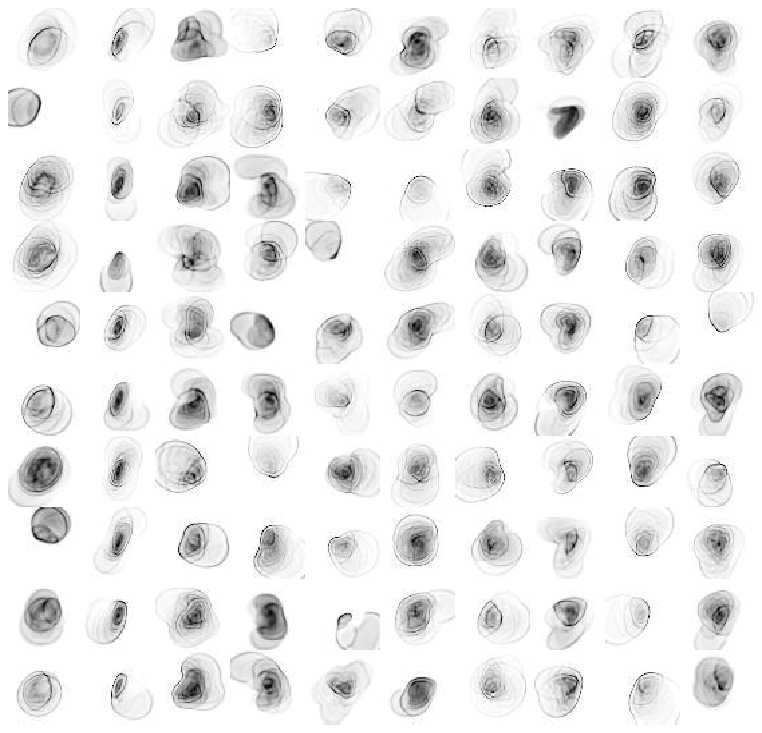}}
		\\
		\begin{tikzpicture}
		\node[rotate=90] at (0,0) { $\ \  N=300$};
		\end{tikzpicture}
		\fbox{\includegraphics[trim={1.3cm 1cm 1.1cm 5.76cm},clip,width=0.35\textwidth]{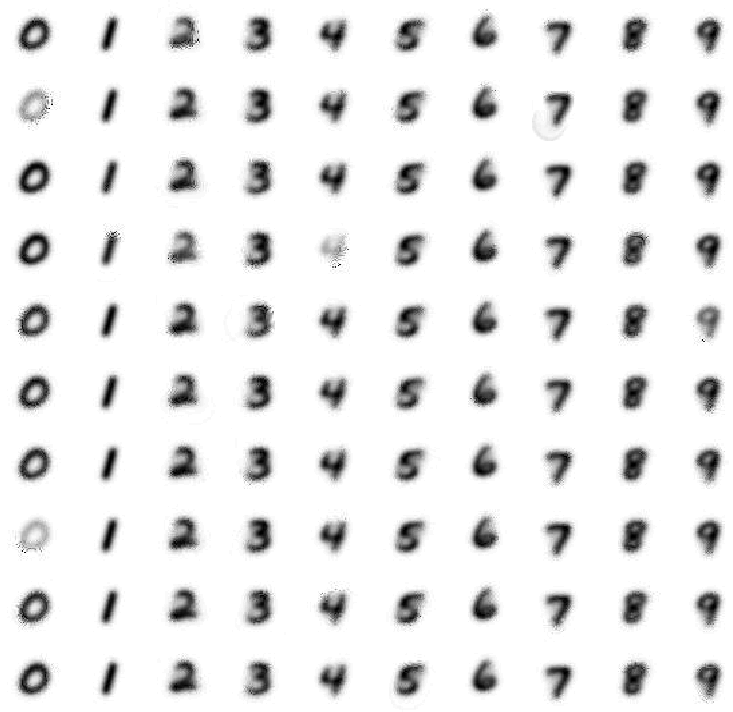}}
		\caption{Local WB of the digits of the MNIST dataset for a subset of $3$ agents out of $1000$ on an Erd\H{o}s-R\'enyi random graph. A video of the evolution of the local barycenters for $10$ agents is available at \url{http://bit.ly/2t9fn0Y}.}
		\label{fig:mnist}
		\vspace{-0.3cm}
	\end{figure}

	\section{Discussion and Conclusions}\label{sec:conclusions}

	We developed a novel algorithm for the distributed computation of WB over networks where a group of agents connected over a network and each agent holds some local probability distribution with finite support. Our results provably guarantee that all agents in the network will converge to the WB of all distribution held by the agents in the network. We provide an explicit and non-asymptotic convergence rate of the order $O(1/k^2)$ with an additional cost proportional to the ratio between the maximum and minimum degree among the nodes in the graph over which the agents exchange information.   
	
	The case where spectral information of the network is not available or when the graphs are directed or change with time require further study. The use of second-order information can also be exploited to get better performance. Also, recently proposed stochastic approaches can provide more efficient algorithms~\cite{Claici2018,Staib2017,dvurechensky2018decentralize,rogozin2018optimal}.

	\bibliographystyle{IEEEtran}
	\bibliography{IEEEfull,wass,opt_dec2}

\end{document}